\def\sa{{\mathfrak a}}
   \def\cH{{\mathcal H}}   
      \def\cL{{\mathcal L}}
      \def\cO{{\mathcal O}}
\def\cal H{{\mathcal H}}
\def\R{\mathbb{R}}
\def\C{\mathbb{C}}
\def\N{\mathbb{N}}
\def\dom{{\text{\rm dom\,}}}
\def\phi{\varphi}
\def\d{\textup{d}}
\DeclareMathOperator{\Real}{Re}
\DeclareMathOperator{\spann}{span}
\renewcommand{\theta}{\vartheta}
\newtheorem{theorem}{Theorem}[section]
\newtheorem*{thm*}{Theorem}
\newtheorem{proposition}[theorem]{Proposition}
\newtheorem{lemma}[theorem]{Lemma}
\newtheorem{assumption}[theorem]{Assumption}
\theoremstyle{definition}
\numberwithin{equation}{section}
\title[]{Strict inequality of Robin eigenvalues for elliptic differential operators on Lipschitz domains}
\author{Jonathan Rohleder}
\address{Institut f\"ur Numerische Mathematik \\
Technische Universit\"at Graz \\
Steyrergasse 30\\
A-8010 Graz\\
Austria}
\email{rohleder@tugraz.at}
\begin{document}

\begin{abstract}
On a bounded Lipschitz domain we consider two selfadjoint operator realizations of the same second order elliptic differential expression subject to Robin boundary conditions, where the coefficients in the boundary conditions are functions. We prove that inequality between these functions on the boundary implies strict inequality between the eigenvalues of the two operators, provided that the inequality of the functions in the boundary conditions is strict on an arbitrarily small nonempty, open set.
\end{abstract}

\maketitle

\section{Introduction}

We consider an elliptic differential expression of second order of the form
\begin{align}\label{eq:diffExprIntro}
 \cL = - \sum_{j,k = 1}^n \partial_j a_{jk} \partial_k + \sum_{j = 1}^n \big( a_j \partial_j - \partial_j \overline{a_j} \big)  + a
\end{align}
with bounded Lipschitz coefficients on a bounded, connected Lipschitz domain \linebreak $\Omega \subset \R^n$, $n \geq 2$; see Assumption~\ref{ass:A} below. Given two real-valued functions $\theta_1, \theta_2 \in L^p (\partial \Omega)$ (for appropriate $p$, see Assumption~\ref{ass:B} below) with 
\begin{align}\label{eq:ineq}
\theta_1 \leq \theta_2 \quad \text{on}~\partial \Omega
\end{align}
we focus on the purely discrete spectra of the selfadjoint operators associated with $\cL$ in $L^2 (\Omega)$ subject to the Robin boundary conditions
\begin{align}\label{eq:Robin}
 \frac{\partial u}{\partial \nu_\cL} \big|_{\partial \Omega} + \theta_j u |_{\partial \Omega} = 0, \quad j = 1, 2;
\end{align}
here $u |_{\partial \Omega}$ denotes the trace and $\frac{\partial u}{\partial \nu_\cL} |_{\partial \Omega}$ is the conormal derivative of $u$ at the boundary $\partial \Omega$; cf.~Section~\ref{sec:prel}. The eigenvalues corresponding to~\eqref{eq:Robin} form a real sequence bounded from below, which accumulates to $+ \infty$; we denote these eigenvalues by
\begin{align*}
 \lambda_1^{\theta_j} \leq \lambda_2^{\theta_j} \leq \dots, \quad j = 1, 2,
\end{align*}
where we count multiplicities. From~\eqref{eq:ineq} it follows immediately via the variational formulation of the eigenvalue problems that 
\begin{align*}
 \lambda_k^{\theta_1} \leq \lambda_k^{\theta_2}, \quad k \in \N.
\end{align*}
Our aim in this note is to show that the inequality becomes strict for all $k$,
\begin{align*}
 \lambda_k^{\theta_1} < \lambda_k^{\theta_2}, \quad k \in \N,
\end{align*}
whenever
\begin{align*}
 \theta_1 |_\omega < \theta_2 |_\omega 
\end{align*}
holds for an arbitrary nonempty, open set $\omega \subset \partial \Omega$. This observation complements various results on eigenvalue inequalities for the Laplacians with Dirichlet and Neumann or Dirichlet and Robin boundary conditions, see, e.g., the classical works~\cite{F91,LW86,P55,P52,S54} and the more recent contributions~\cite{AM12,F04,GM09,S08}. For further investigations of elliptic differential operators subject to (generalized) Robin boundary conditions and of their spectra we refer the reader to~\cite{AW03,BL07,BLLLP10,D00,DK10,GM08,G11,K11,KL12,LS04,LR12,W06} and their references. 

We wish to remark that if $\Omega$ is replaced by the (unbounded) exterior of a bounded Lipschitz domain one can show that the operators corresponding to $\theta_1$ and $\theta_2$ have the same essential spectra. In this case our result remains true for all eigenvalues below the bottom of the joint essential spectrum.

The proof of our result is carried out in Section~\ref{sec:main}. It adapts Filonov's method in~\cite{F04} and combines it with a consideration made in~\cite{BR12}, based on a unique continuation argument. Before that, in Section~\ref{sec:prel}, we discuss properties of elliptic differential operators with Robin boundary conditions on Lipschitz domains.

\section{Elliptic differential operators with Robin boundary conditions on Lipschitz domains}\label{sec:prel}

In this section we collect preliminary material on trace maps on Lipschitz domains and recall the definition of selfadjoint elliptic differential operators with Robin boundary conditions via sesquilinear forms.

Let us first fix the assumptions on the domain $\Omega$ and the differential expression~$\cL$.

\begin{assumption}\label{ass:A}
The set $\Omega \subset \R^n$, $n \geq 2$, is a bounded, connected Lipschitz domain, see, e.g.,~\cite{M00} for the standard definition. The differential expression $\cL$ on $\Omega$ is given by~\eqref{eq:diffExprIntro}, where $a_{jk}, a_j : \overline{\Omega} \to \C$ are bounded Lipschitz functions satisfying $a_{jk} (x) = \overline{a_{kj} (x)}$ for all $x \in \overline{\Omega}$, and $a : \Omega \to \R$ is measurable and bounded. Moreover,~$\cL$ is uniformly elliptic on $\Omega$, i.e., there exists $E > 0$ such that
\begin{align*}
% \label{eq:elliptic}
 \sum_{j, k = 1}^n a_{jk} (x) \xi_j \xi_k \geq E \sum_{k = 1}^n \xi_k^2, \quad x \in \overline{\Omega}, \quad \xi = (\xi_1, \dots, \xi_n)^T \in \R^n.
\end{align*}
\end{assumption}

Let us denote by $H^s (\Omega)$ and $H^s (\partial \Omega)$ the Sobolev spaces of orders $s \in \R$ on $\Omega$ and its boundary $\partial \Omega$, respectively. Here and in the following $\partial \Omega$ is equipped with the usual surface measure; cf.~\cite{M00}. Recall that there exists a unique bounded trace map from $H^1 (\Omega)$ onto $H^{1/2} (\partial \Omega)$ which extends the mapping 
\begin{align*}
 C^\infty (\overline{\Omega}) \ni u \mapsto u |_{\partial \Omega};
\end{align*}
we simply write $u |_{\partial \Omega}$ for the trace of an arbitrary $u \in H^1 (\Omega)$. Moreover, for $u \in H^1 (\Omega)$ satisfying $\cL u\in L^2(\Omega)$ (in the distributional sense) we define the conormal derivative $\frac{\partial u}{\partial \nu_\cL} |_{\partial \Omega}$ of $u$ at $\partial \Omega$ (with respect to the differential expression $\cL$) to be the unique element in $H^{- 1/2} (\partial \Omega)$ which satisfies the first Green identity
\begin{align}\label{eq:Green1}
 \sa_0 [u, v] = (\cL u, v) + \Big(\frac{\partial u}{\partial \nu_\cL} \big|_{\partial \Omega}, v |_{\partial \Omega} \Big)_{\partial \Omega}, \quad v \in H^1 (\Omega);
\end{align}
here $( \cdot, \cdot)_{\partial \Omega}$ is the (sesquilinear) duality between $H^{1/2} (\partial \Omega)$ and its dual space $H^{- 1/2} (\partial \Omega)$ and
\begin{align*}
 \sa_0 [u, v] := \int_\Omega \Big( \sum_{j,k = 1}^n a_{jk} \partial_k u \cdot \overline{\partial_j v} + \sum_{j = 1}^n \big( a_j (\partial_j u)\cdot  \overline v + \overline{a_j} u\cdot \overline{\partial_j v}\big)  + a u \overline v \Big) \d x
\end{align*}
for $u, v \in H^1(\Omega)$. 

Let us come to the definition of the operators under consideration. The assumptions on the Robin coefficient $\theta$ are the following.

\begin{assumption}\label{ass:B}
The function $\theta : \partial \Omega \to \R$ satisfies
\begin{align*}
 \theta \in L^{n - 1} (\partial \Omega) \quad \text{if}~n > 2 \quad \text{and} \quad \theta \in L^p (\partial \Omega)~\text{for~some}~p > 1 \quad \text{if}~n = 2.
\end{align*}
\end{assumption}

For $\theta$ as in Assumption~\ref{ass:B} we define a sesquilinear form $\sa_\theta$ in $L^2 (\Omega)$ by
\begin{align}\label{eq:atheta}
 \sa_\theta [u, v] := \sa_0 [u, v] + (\theta u |_{\partial \Omega}, v |_{\partial \Omega} )_{\partial \Omega}, \quad u, v \in \dom a_\theta := H^1 (\Omega).
\end{align}
It follows from Sobolev embedding theorems that $\theta u |_{\partial \Omega}$ belongs to $H^{- 1/2} (\partial \Omega)$ for each $u \in H^1 (\Omega)$, see, e.g,~\cite[Lemma~5.3]{GM09}. The form $\sa_\theta$ corresponds to a selfadjoint differential operator in $L^2 (\Omega)$, as the following proposition states. For the required material on sesquilinear forms and corresponding selfadjoint operators see Appendix~\ref{sec:App}.

\begin{proposition}\label{prop:essSpec}
Let Assumption~\ref{ass:A} and Assumption~\ref{ass:B} be satisfied. Then the sesquilinear form $\sa_\theta$ in~\eqref{eq:atheta} is symmetric, densely defined, semibounded below and closed in $L^2 (\Omega)$; the corresponding selfadjoint operator in $L^2 (\Omega)$ is given by
\begin{align}\label{eq:Atheta}
 A_\theta u = \cL u, \quad \dom A_\theta = \left\{ u \in H^1 (\Omega) : \cL u \in L^2 (\Omega), \frac{\partial u}{\partial \nu_\cL} \big|_{\partial \Omega} + \theta u |_{\partial \Omega} = 0 \right\}.
\end{align}
 In particular, for $\theta = 0$, $A_\theta = A_0$ is the selfadjoint Neumann operator associated with $\cL$. The spectrum of $A_\theta$ is bounded from below and its essential spectrum is empty; thus $\sigma (A_\theta)$ consists of isolated eigenvalues with finite multiplicities, which accumulate only to $+ \infty$.
\end{proposition}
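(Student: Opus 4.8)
The plan is to treat $\sa_\theta$ as a form perturbation of $\sa_0$ and to invoke the representation theory for semibounded closed forms recalled in Appendix~\ref{sec:App}. Symmetry of $\sa_\theta$ is immediate: the Hermiticity condition $a_{jk}=\overline{a_{kj}}$ makes the principal part of $\sa_0$ symmetric, the first-order terms are symmetrized by construction, $a$ is real, and the boundary term is symmetric because $\theta$ is real-valued; density of $\dom\sa_\theta=H^1(\Omega)$ in $L^2(\Omega)$ is clear since $H^1(\Omega)\supset C_c^\infty(\Omega)$. For $\sa_0$ alone, a G\aa rding-type estimate gives closedness and semiboundedness: uniform ellipticity yields $\sa_0[u,u]\ge E\|\nabla u\|_{L^2(\Omega)}^2$ minus the first- and zeroth-order contributions, and absorbing the first-order terms by Young's inequality produces constants $c>0$, $C\ge 0$ with $\sa_0[u,u]+C\|u\|_{L^2(\Omega)}^2\ge c\|u\|_{H^1(\Omega)}^2$. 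Hence the form norm of $\sa_0$ is equivalent to the $H^1(\Omega)$-norm, so $\sa_0$ is closed and semibounded.

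The crucial step is to show that the boundary term is infinitesimally $\sa_0$-form bounded, i.e.\ that for every $\varepsilon>0$ there is $C_\varepsilon$ with
\[
 \big| (\theta\, u|_{\partial\Omega}, u|_{\partial\Omega})_{\partial\Omega}\big|\le \varepsilon\,\|u\|_{H^1(\Omega)}^2 + C_\varepsilon\|u\|_{L^2(\Omega)}^2, \qquad u\in H^1(\Omega).
\]
Here I would combine the trace estimate $\|u|_{\partial\Omega}\|_{H^{1/2}(\partial\Omega)}\le C\|u\|_{H^1(\Omega)}$ with the critical Sobolev embedding $H^{1/2}(\partial\Omega)\hookrightarrow L^{2(n-1)/(n-2)}(\partial\Omega)$ for $n>2$ (respectively $H^{1/2}(\partial\Omega)\hookrightarrow L^q(\partial\Omega)$ for all $q<\infty$ when $n=2$). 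Splitting $\theta=\theta\,\mathbf{1}_{\{|\theta|\le M\}}+\theta\,\mathbf{1}_{\{|\theta|>M\}}$, the tail has $L^{n-1}(\partial\Omega)$-norm tending to $0$ as $M\to\infty$, and H\"older's inequality with exponents $n-1$ and $(n-1)/(n-2)$ together with the embedding bounds its contribution by $o(1)\,\|u\|_{H^1(\Omega)}^2$; the bounded part is controlled by $M\|u|_{\partial\Omega}\|_{L^2(\partial\Omega)}^2$, to which the standard trace interpolation inequality $\|u|_{\partial\Omega}\|_{L^2(\partial\Omega)}^2\le \delta\|u\|_{H^1(\Omega)}^2+C_\delta\|u\|_{L^2(\Omega)}^2$ (an Ehrling-type consequence of the compactness of the trace $H^1(\Omega)\to L^2(\partial\Omega)$) applies. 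Choosing first $M$ large and then $\delta$ small yields the displayed estimate with arbitrarily small $\varepsilon$. The exponents in Assumption~\ref{ass:B} are exactly those making H\"older's inequality pair with the critical boundary Sobolev exponent, which is where that hypothesis enters.

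With infinitesimal form boundedness in hand, the KLMN theorem shows that $\sa_\theta$ is closed and semibounded below, and moreover $\sa_\theta[u,u]+C'\|u\|_{L^2(\Omega)}^2\ge c'\|u\|_{H^1(\Omega)}^2$ for suitable $c'>0$, so that the form norm of $\sa_\theta$ remains equivalent to the $H^1(\Omega)$-norm. The first representation theorem then produces the unique selfadjoint operator $A_\theta$. To identify it, for $u\in\dom A_\theta$ I would first test the identity $\sa_\theta[u,v]=(A_\theta u,v)$ against $v\in C_c^\infty(\Omega)$ to obtain $\cL u=A_\theta u\in L^2(\Omega)$ in the distributional sense; then, using the first Green identity~\eqref{eq:Green1} for general $v\in H^1(\Omega)$ and surjectivity of the trace onto $H^{1/2}(\partial\Omega)$, the remaining boundary pairing forces $\frac{\partial u}{\partial\nu_\cL}\big|_{\partial\Omega}+\theta u|_{\partial\Omega}=0$ in $H^{-1/2}(\partial\Omega)$, which is precisely the description in~\eqref{eq:Atheta}; the converse inclusion follows by running the same computation backwards. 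Setting $\theta=0$ gives the Neumann boundary condition by inspection.

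Finally, the discreteness of the spectrum follows from compactness: since the form norm of $\sa_\theta$ is equivalent to the $H^1(\Omega)$-norm, the form domain embeds compactly into $L^2(\Omega)$ by the Rellich--Kondrachov theorem on the bounded Lipschitz domain $\Omega$, and a semibounded selfadjoint operator whose form domain embeds compactly into the Hilbert space has compact resolvent; hence $\sigma(A_\theta)$ is purely discrete, bounded below, and accumulates only at $+\infty$. I expect the main obstacle to be the infinitesimal form bound at the critical exponent: in the subcritical regime a cruder H\"older estimate with the compact embedding suffices, but at $\theta\in L^{n-1}(\partial\Omega)$ the embedding $H^{1/2}(\partial\Omega)\hookrightarrow L^{2(n-1)/(n-2)}(\partial\Omega)$ is continuous yet not compact, so one genuinely needs the truncation argument to extract an arbitrarily small factor in front of $\|u\|_{H^1(\Omega)}^2$, a single application of H\"older and the embedding only yielding a finite, possibly large, relative bound.
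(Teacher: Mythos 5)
Your proposal is correct, and it is in fact more detailed than the paper, which does not prove Proposition~\ref{prop:essSpec} at all: it only remarks that for $\cL=-\Delta$ the result is in \cite[Theorem~4.5 and Lemma~5.3]{GM09}, that the argument there rests on the \emph{compactness} of the multiplication operator $u|_{\partial\Omega}\mapsto\theta u|_{\partial\Omega}$ from $H^{1/2}(\partial\Omega)$ to $H^{-1/2}(\partial\Omega)$, and that the same scheme carries over to general $\cL$ under Assumption~\ref{ass:A}. Your route differs only in how the boundary perturbation is tamed: instead of first proving compactness of the multiplication operator and then deducing infinitesimal form boundedness abstractly, you extract the small relative bound directly by truncating $\theta$ at height $M$, using H\"older at the critical pairing $L^{n-1}\times L^{(n-1)/(n-2)}$ together with $H^{1/2}(\partial\Omega)\hookrightarrow L^{2(n-1)/(n-2)}(\partial\Omega)$ for the tail and the Ehrling-type trace inequality for the bounded part, and then invoke KLMN. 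These are essentially two presentations of the same mechanism (the compactness in \cite{GM09} is itself proved by approximating $\theta$ by bounded functions in $L^{n-1}$-norm and using compactness of $H^{1/2}(\partial\Omega)\hookrightarrow L^{2}(\partial\Omega)$), and your closing remark correctly identifies the genuine subtlety: at the critical exponent a single application of H\"older gives only a finite, not arbitrarily small, relative bound, so the truncation (or, equivalently, the compactness statement) cannot be dispensed with. The remaining steps --- G\aa rding for $\sa_0$, identification of $\dom A_\theta$ via \eqref{eq:Green1} and surjectivity of the trace onto $H^{1/2}(\partial\Omega)$, and discreteness of the spectrum from the compact embedding $H^1(\Omega)\hookrightarrow L^2(\Omega)$ --- are all sound and match what the cited references carry out.
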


For $\cL$ being the Laplacian a proof of Proposition~\ref{prop:essSpec} can be found in~\cite[Theorem~4.5~and~Lemma~5.3]{GM09}, where the compactness of the multiplication operator with the function $\theta$ from $H^{1/2} (\partial \Omega)$ to $H^{-1/2} (\partial \Omega)$ is employed. Since, due to Assumption~\ref{ass:A}, $\cL$ is elliptic and the coefficients of $\cL$ as well as the derivatives of the second and first order coefficients are bounded, an analogous proof can be done in the present situation. We omit the details and refer the reader to~\cite[Chapter~VI]{EE87} for a treatment of general second order elliptic differential operators in the framework of sesquilinear forms.

\section{Strict inequality of Robin eigenvalues for elliptic differential operators}\label{sec:main}

This section is devoted to our main result on strict inequality between Robin eigenvalues. We first state a simple lemma. It is inspired by the (only) Lemma in~\cite{F04} and~\cite[Proposition~2.5]{BR12}.

\begin{lemma}\label{lem:directSum}
Let Assumption~\ref{ass:A} be satisfied and let $\theta_1, \theta_2$ be functions which satisfy Assumption~\ref{ass:B} such that $\theta_1 \leq \theta_2$ on $\partial \Omega$. Moreover, let $A_{\theta_1}$ and $A_{\theta_2}$ be the corresponding operators as in~\eqref{eq:Atheta}. Suppose that $\theta_1 |_\omega < \theta_2|_\omega$ on a nonempty, open set $\omega \subset \partial \Omega$. Then 
\begin{align*}
 \dom A_{\theta_2} \cap \ker (A_{\theta_1} - \mu) = \{ 0 \}
\end{align*}
for all $\mu \in \R$. 
\end{lemma}

\begin{proof}
Let $\mu \in \R$ and $u \in \dom A_{\theta_2} \cap \ker (A_{\theta_1} - \mu)$, i.e., $u \in H^1 (\Omega)$ with $\cL u = \mu u$, and $u$ satisfies both boundary conditions 
\begin{align}\label{eq:bothBC}
 \frac{\partial u}{\partial \nu_\cL} \big|_{\partial \Omega} + \theta_1 u |_{\partial \Omega} = 0 \quad \text{and} \quad \frac{\partial u}{\partial \nu_\cL} \big|_{\partial \Omega} + \theta_2 u |_{\partial \Omega} = 0.
\end{align}
Then $(\theta_2 - \theta_1) u |_{\partial \Omega} = 0$ and, hence, $u |_\omega = 0$ by the assumption that $\theta_2 |_\omega > \theta_1 |_\omega$ on $\omega$. It follows from~\eqref{eq:bothBC} that
\begin{align}\label{eq:normDerivVanish}
 \frac{\partial u}{\partial \nu_\cL} \big|_{\omega} = - \theta_1 |_{\omega} u |_\omega = 0.
\end{align}
Let $\widetilde \Omega \supset \Omega$ be a bounded, connected Lipschitz domain such that $\partial \Omega \setminus \omega \subset \partial \widetilde \Omega$ and $\widetilde \Omega \setminus \Omega$ contains an open ball $\cO$. Let us extend the coefficients $a_{jk}, a_j$, and $a$ of $\cL$ to functions $\widetilde a_{jk}, \widetilde a_j$, and $\widetilde a$ on $\widetilde \Omega$ such that the corresponding differential expression $\widetilde \cL$ on $\widetilde \Omega$ satisfies Assumption~\ref{ass:A}. Moreover, let $\widetilde u$ be the extension by zero of $u$ to $\widetilde \Omega$. Then $u |_{\omega} = 0$ and~\eqref{eq:normDerivVanish} yield 
\begin{align*}
% \label{eq:distrCalc}
 \widetilde u \in H^1 (\widetilde \Omega) \quad \text{and} \quad \widetilde \cL \widetilde u = \mu \widetilde u,
\end{align*}
where the latter equation must be understood distributionally on $\widetilde \Omega$ and implies $\widetilde u \in H^2_{\rm loc} (\widetilde \Omega)$. On the other hand, $\widetilde u$ vanishes on the ball $\cO \subset \widetilde \Omega$. Thus a unique continuation argument yields $\widetilde u = 0$ on $\widetilde \Omega$; cf., e.g.,~\cite{W93} and the proof of~\cite[Proposition~2.5]{BR12}. Hence $u = 0$, which proves the lemma.
\end{proof}

Let us now come to the main result of this note. Under the assumptions of the previous lemma we denote by 
\begin{align*}
 \lambda_1^{\theta_j} \leq \lambda_2^{\theta_j} \leq \dots
\end{align*}
the eigenvalues of $A_{\theta_j}$, $j = 1, 2$, counted with multiplicities. 

\begin{theorem}\label{thm:main}
Let Assumption~\ref{ass:A} be satisfied and let $\theta_1, \theta_2$ be functions which satisfy Assumption~\ref{ass:B} such that $\theta_1 \leq \theta_2$ on $\partial \Omega$. Moreover, let $A_{\theta_1}$ and $A_{\theta_2}$ be the corresponding operators as in~\eqref{eq:Atheta}. Suppose that $\theta_1 |_\omega < \theta_2|_\omega$ on a nonempty, open set $\omega \subset \partial \Omega$. Then the inequality
\begin{align*}
 \lambda_k^{\theta_1} < \lambda_k^{\theta_2}, \quad k \in \N,
\end{align*}
holds.
\end{theorem}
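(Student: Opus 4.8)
The plan is to reduce the theorem to a single spectral dimension count and to close the argument with Lemma~\ref{lem:directSum}. For $\mu\in\R$ and $j=1,2$ let $E_{\theta_j}(\cdot)$ denote the spectral projection of the semibounded operator $A_{\theta_j}$, which by Proposition~\ref{prop:essSpec} has purely discrete spectrum. Fixing $k$ and writing $\mu:=\lambda_k^{\theta_2}$, the assertion $\lambda_k^{\theta_1}<\lambda_k^{\theta_2}$ is equivalent to the statement that $A_{\theta_1}$ has at least $k$ eigenvalues (with multiplicities) strictly below $\mu$, that is, $\dim\ran E_{\theta_1}((-\infty,\mu))\ge k$. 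Since $\lambda_1^{\theta_2}\le\dots\le\lambda_k^{\theta_2}=\mu$ already gives $\dim\ran E_{\theta_2}((-\infty,\mu])\ge k$, it suffices to prove the general inequality
\begin{align*}
 \dim\ran E_{\theta_2}((-\infty,\mu])\le\dim\ran E_{\theta_1}((-\infty,\mu)),\quad\mu\in\R.
\end{align*}

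I would establish this by showing that the restriction of the projection $E_{\theta_1}((-\infty,\mu))$ to the finite-dimensional space $L:=\ran E_{\theta_2}((-\infty,\mu])$ is \emph{injective}, which immediately yields the displayed inequality of dimensions. Concretely, suppose $u\in L$ lies in the kernel of this restriction, i.e. $u\in L\cap\ran E_{\theta_1}([\mu,\infty))$; I must show $u=0$. Note first that $L$ is spanned by eigenfunctions of $A_{\theta_2}$, so $u\in\dom A_{\theta_2}\subset H^1(\Omega)$, whence both $\sa_{\theta_1}[u,u]$ and $\sa_{\theta_2}[u,u]$ are defined.

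The heart of the matter is a sandwiching of Rayleigh quotients. On the one hand, $u\in\ran E_{\theta_2}((-\infty,\mu])$ forces $\sa_{\theta_2}[u,u]\le\mu\|u\|^2$; on the other hand, $u\in\ran E_{\theta_1}([\mu,\infty))$ forces $\sa_{\theta_1}[u,u]\ge\mu\|u\|^2$. Since $\theta_1\le\theta_2$ yields $\sa_{\theta_1}[u,u]\le\sa_{\theta_2}[u,u]$, the difference being the nonnegative quantity $((\theta_2-\theta_1)u|_{\partial\Omega},u|_{\partial\Omega})_{\partial\Omega}$, one obtains the chain
\begin{align*}
 \mu\|u\|^2\le\sa_{\theta_1}[u,u]\le\sa_{\theta_2}[u,u]\le\mu\|u\|^2,
\end{align*}
so that in particular $\sa_{\theta_1}[u,u]=\mu\|u\|^2$. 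By the spectral theorem this equality, together with $u\in\ran E_{\theta_1}([\mu,\infty))$, pins the spectral measure of $u$ (with respect to $A_{\theta_1}$) to the single point $\mu$, whence $u\in\ker(A_{\theta_1}-\mu)$. As we already have $u\in\dom A_{\theta_2}$, Lemma~\ref{lem:directSum} gives $u=0$, completing the injectivity argument and hence the proof.

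The main obstacle, and the step I would treat most carefully, is the passage from the scalar equality $\sa_{\theta_1}[u,u]=\mu\|u\|^2$ to the membership $u\in\ker(A_{\theta_1}-\mu)$: this is where the spectral calculus must be invoked correctly, using that $u$ lies in the form domain $H^1(\Omega)$ so that $\sa_{\theta_1}[u,u]=\int_{[\mu,\infty)}\lambda\,\d\|E_{\theta_1}(\lambda)u\|^2$, and that the integrand $\lambda-\mu$ is nonnegative on the relevant spectral interval, forcing the measure to concentrate at $\mu$. The remaining ingredients---the variational bounds on the two spectral subspaces, the nonnegativity of the boundary term, and the reduction to the dimension count---are routine, and the genuinely analytic input (unique continuation for $\cL$) has been entirely absorbed into Lemma~\ref{lem:directSum}.
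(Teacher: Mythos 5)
Your proof is correct, and it reaches the counting-function inequality by a genuinely different mechanism than the paper. The paper follows Filonov's method: it forms the test space $F + \ker(A_{\theta_1}-\mu)$ with $F = \spann\{\ker(A_{\theta_2}-\lambda):\lambda\le\mu\}$, verifies $\sa_{\theta_1}[u+v]\le\mu\|u+v\|_{L^2(\Omega)}^2$ by an explicit computation of the cross terms via the first Green identity \eqref{eq:Green1} and the boundary condition $\frac{\partial v}{\partial\nu_\cL}|_{\partial\Omega}=-\theta_1 v|_{\partial\Omega}$, and then invokes the variational characterization \eqref{eq:EVcountId} together with Lemma~\ref{lem:directSum} (which guarantees the sum is direct) to get $N_{\theta_1}(\mu)\ge N_{\theta_2}(\mu)+\dim\ker(A_{\theta_1}-\mu)$. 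You instead show that $E_{\theta_1}((-\infty,\mu))$ is injective on $\ran E_{\theta_2}((-\infty,\mu])$ by sandwiching the Rayleigh quotient of a vector in the kernel of that restriction between the two spectral bounds and the form inequality $\sa_{\theta_1}\le\sa_{\theta_2}$, forcing it into $\ker(A_{\theta_1}-\mu)$ and then killing it with Lemma~\ref{lem:directSum}; this yields the equivalent inequality $\#\{j:\lambda_j^{\theta_1}<\mu\}\ge N_{\theta_2}(\mu)$. Both arguments use Lemma~\ref{lem:directSum} as the essential input, but yours trades the min-max test-space construction and the Green-identity cross-term computation for an abstract spectral-calculus argument; what it buys is a cleaner proof of the theorem itself (no cross terms, no use of \eqref{eq:Green1} beyond what is already encoded in the operators), at the cost of having to justify carefully the identity $\sa_{\theta_1}[u]=\int\lambda\,\d\|E_{\theta_1}(\lambda)u\|^2$ for $u$ in the form domain, which you correctly flag as the delicate step. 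Do also record explicitly that the pairing $((\theta_2-\theta_1)u|_{\partial\Omega},u|_{\partial\Omega})_{\partial\Omega}$ is the integral of $(\theta_2-\theta_1)\,|u|_{\partial\Omega}|^2$ over $\partial\Omega$ (finite by the Sobolev trace embedding under Assumption~\ref{ass:B}), so that its nonnegativity is immediate; this is the same fact the paper uses silently in \eqref{eq:Nineq}.
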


\begin{proof}
Let $N_{\theta_1}$ and $N_{\theta_2}$ be the eigenvalue counting functions for $A_{\theta_1}$ and $A_{\theta_2}$, respectively, as in~\eqref{eq:EVcount}. Following~\eqref{eq:EVcountId} these functions can be expressed as
\begin{align*}
 N_{\theta_j} (\mu) = \max \left\{ \dim L : L~\text{subspace~of}~H^1 (\Omega), \sa_{\theta_j} [u] \leq \mu \| u \|_{L^2 (\Omega)}^2, u \in L \right\},
\end{align*}
$\mu \in \R$, where $a_{\theta_j}$ is the sesquilinear form corresponding to $\theta_j$ as in~\eqref{eq:atheta}, $j = 1, 2$. For $\mu \in \R$ let 
\begin{align*}
 F := \spann \left\{ \ker (A_{\theta_2} - \lambda) : \lambda \leq \mu \right\}.
\end{align*}
Then $\dim F = N_{\theta_2} (\mu)$ and
\begin{align*}
 \sa_{\theta_2} [u] \leq \mu \|u\|_{L^2 (\Omega)}^2, \quad u \in F.
\end{align*}
For $u \in F$ and $v \in \ker (A_{\theta_1} - \mu)$ we obtain with the help of~\eqref{eq:Green1}
\begin{align*}
 \sa_{\theta_1} [u + v] & = \sa_0 [u] + \sa_0 [v] + 2 \Real \sa_0 [v, u] + \big( \theta_1 (u + v) |_{\partial \Omega}, (u + v) |_{\partial \Omega} \big)_{\partial \Omega} \\
  & \leq \mu \int_\Omega \big( |u|^2 + |v|^2 \big) d x - ( \theta_2 u |_{\partial \Omega}, u |_{\partial \Omega})_{\partial \Omega} - ( \theta_1 v |_{\partial \Omega}, v |_{\partial \Omega})_{\partial \Omega} \\ 
  & \quad + 2 \Real \left( \int_\Omega (\cL v) \overline{u} d x + \Big( \frac{\partial v}{\partial \nu_\cL} \big|_{\partial \Omega}, u |_{\partial \Omega} \Big)_{\partial \Omega} \right) \\
   & \quad + \big( \theta_1 (u + v) |_{\partial \Omega}, (u + v) |_{\partial \Omega} \big)_{\partial \Omega},
\end{align*}
where we have used $\cL v = \mu v$. Applying this identity once more and making use of $\frac{\partial v}{\partial \nu_\cL} |_{\partial \Omega} = - \theta_1 v |_{\partial \Omega}$ it follows
\begin{align}\label{eq:Nineq}
 \sa_{\theta_1} [u + v] & \leq \mu \left( \int_\Omega \big( |u|^2 + |v|^2 \big) d x + 2 \Real \int_\Omega v \overline{u} d x \right) + \big( (\theta_1 - \theta_2) u |_{\partial \Omega}, u |_{\partial \Omega} \big)_{\partial \Omega} \nonumber \\
 & \leq \mu \left\| u + v \right\|_{L^2 (\Omega)}^2,
\end{align}
since $\theta_1 \leq \theta_2$. By Lemma~\ref{lem:directSum}, $\dim (F + \ker (A_{\theta_1} - \mu) ) = N_{\theta_2} (\mu) + \dim \ker (A_{\theta_1} - \mu)$, therefore~\eqref{eq:Nineq} yields
\begin{align}\label{eq:Nid}
 N_{\theta_1} (\mu) \geq N_{\theta_2} (\mu) + \dim \ker (A_{\theta_1} - \mu), \quad \mu \in \R.
\end{align}
Letting $k \in \N$ be arbitrary and $\mu = \lambda_k^{\theta_2}$ we obtain from~\eqref{eq:Nid}
\begin{align*}
 \# \Big\{ j \in \N : \lambda_j^{\theta_1} < \mu \Big\} = N_{\theta_1} (\mu) - \dim \ker (A_{\theta_1} - \mu) \geq N_{\theta_2} (\mu) \geq k.
\end{align*}
Hence $\lambda_k^{\theta_1} < \lambda_k^{\theta_2}$, the assertion of the theorem.
\end{proof}

\begin{appendix}

\section{Sesquilinear forms and selfadjoint operators}\label{sec:App}

In this appendix we briefly summarize basic statements on semibounded sesqui\-linear forms and corresponding selfadjoint operators. Here $\cH$ is a complex Hilbert space with inner product $(\cdot, \cdot)$ and norm $\| \cdot \|$. A sesquilinear form (short: form) in $\cH$ is a mapping $\sa : \dom \sa \times \dom \sa \to \C$ which is linear in the first and anti-linear in the second entry, where $\dom \sa$ is a linear subspace of $\cH$. We say that $\sa$ is densely defined if $\dom \sa$ is dense in $\cH$. The form $\sa$ is called symmetric if  
\begin{align*}
 \sa [u, v] = \overline{\sa [v, u]}, \quad u, v \in \dom \sa, 
\end{align*}
and semibounded below if there exists $c_\sa \in \R$ with
\begin{align*}
 \sa [u] := \sa [u, u] \geq c_\sa \|u\|^2, \quad u \in \dom \sa.
\end{align*}
Furthermore, $\sa$ is said to be closed if $\dom \sa$, equipped with the norm 
\begin{align*}
% \label{eq:formNorm}
 \| u \|_\sa := \left( \sa [u] + (1 - c_\sa) \|u\|^2 \right)^{1/2}, \quad u \in \dom \sa,
\end{align*}
is complete. The items of the following proposition can be found in several standard textbooks as, e.g.,~\cite{EE87,K95,RS78}.

\begin{proposition}\label{prop:reprThm}
Let the sesquilinear form $\sa$ in $\cH$ be densely defined, symmetric, semibounded below by some $c_\sa \in \R$, and closed. Then the following assertions hold.
\begin{enumerate}
 \item There exists a unique selfadjoint operator $A$ in $\cH$ with $\dom A \subset \dom \sa$ and
\begin{align*}
 ( A u, v) = \sa [u, v], \quad u \in \dom A, v \in \dom \sa.
\end{align*}
Moreover, the spectrum of $A$ is bounded from below by $c_\sa$.
 \item Assume, additionally, that $A$ has an empty essential spectrum and let
\begin{align*}
 \lambda_1 \leq \lambda_2 \leq \dots
\end{align*}
be the eigenvalues of $A$, counted with multiplicities. Then the min-max principle
\begin{align*}
 \lambda_j (A) = \min_{\substack{L~\text{subspace~of}~\dom \sa \\ \dim L = j}} \max_{\substack{u \in L \\ \|u\| = 1}} \sa [u], \quad j \in \N,
\end{align*}
holds. In particular, the eigenvalue counting function
\begin{align}\label{eq:EVcount}
 N_A (\mu) := \# \left\{ j \in \N : \lambda_j \leq \mu \right\}, \quad \mu \in \R,
\end{align}
can be expressed as
\begin{align}\label{eq:EVcountId}
 N_A (\mu) = \max \left\{ \dim L : L~\textup{subspace~of}~\dom \sa, \sa [u] \leq \mu \|u\|^2, u \in L \right\}.
\end{align}
\end{enumerate}
\end{proposition}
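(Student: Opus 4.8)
The plan is to treat the two assertions separately, handling part (i) with the standard representation-theorem machinery and part (ii) with a spectral decomposition of the form.

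For part (i), the first step is to turn the form domain into an auxiliary Hilbert space. Since $\sa$ is symmetric and semibounded below by $c_\sa$, the sesquilinear form $\tilde\sa[u,v] := \sa[u,v] + (1 - c_\sa)(u,v)$ is an inner product on $\dom \sa$ with associated norm $\| \cdot \|_\sa$, and closedness of $\sa$ says precisely that $(\dom \sa, \| \cdot \|_\sa)$ is complete; call this Hilbert space $\cH_\sa$. Because $\| u \|_\sa^2 \geq \| u \|^2$, the inclusion $\cH_\sa \hookrightarrow \cH$ is norm-decreasing. For fixed $f \in \cH$ the functional $v \mapsto (f,v)$ is bounded and antilinear on $\cH_\sa$, so the Riesz representation theorem provides a unique $Gf \in \dom \sa$ with $\tilde\sa[Gf, v] = (f, v)$ for all $v \in \dom \sa$. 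Next I would verify, exploiting symmetry of $\tilde\sa$ and density of $\dom \sa$, that the operator $G$ on $\cH$ is bounded, injective, selfadjoint and nonnegative with dense range. Its inverse $G^{-1}$ (on $\ran G$) is then selfadjoint, and $A := G^{-1} - (1 - c_\sa)$ is the sought operator: one checks $\dom A = \ran G \subset \dom \sa$ and, for $u = Gf$ and $v \in \dom \sa$, $(Au, v) = (f,v) - (1 - c_\sa)(u,v) = \sa[u,v]$, while $(Au,u) = \sa[u] \geq c_\sa \| u \|^2$ gives the lower bound. Uniqueness follows by showing that any competing operator $A'$ satisfies $A' \subset A$, whence $A' = A$ since both are selfadjoint.

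For part (ii), the hypothesis $\sigma_{\mathrm{ess}}(A) = \emptyset$ means $A$ has purely discrete spectrum: eigenvalues $\lambda_1 \leq \lambda_2 \leq \cdots \to +\infty$ with an orthonormal eigenbasis $(e_j)$ of $\cH$. The key step is to record the spectral representation of the form on the whole form domain. Since $G = (A + (1 - c_\sa))^{-1}$ has eigenvalues $(\lambda_j + 1 - c_\sa)^{-1}$, the spectral theorem yields, for every $u \in \dom \sa = \dom\big((A + 1 - c_\sa)^{1/2}\big)$ expanded as $u = \sum_j (u,e_j) e_j$, the identity $\sa[u] = \sum_j \lambda_j |(u,e_j)|^2$. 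Granting this, the min-max principle over $\dom \sa$ is elementary: for any $j$-dimensional subspace $L \subset \dom \sa$ there is a nonzero $u \in L$ orthogonal to $e_1, \dots, e_{j-1}$, whence $\sa[u] = \sum_{i \geq j} \lambda_i |(u,e_i)|^2 \geq \lambda_j \| u \|^2$, so the inner maximum is at least $\lambda_j$, while $L = \spann\{ e_1, \dots, e_j \}$ attains $\lambda_j$. This proves $\lambda_j(A) = \min_{\dim L = j} \max_{u \in L, \| u \| = 1} \sa[u]$. The counting-function identity \eqref{eq:EVcountId} is then immediate: the subspace $L_0 := \spann\{ e_j : \lambda_j \leq \mu \}$ has dimension $N_A(\mu)$ and satisfies $\sa[u] \leq \mu \| u \|^2$ on $L_0$, so the maximum in \eqref{eq:EVcountId} is at least $N_A(\mu)$; conversely, if $L \subset \dom \sa$ has $\dim L = m$ and $\sa[u] \leq \mu \| u \|^2$ on $L$, then the min-max formula gives $\lambda_m \leq \max_{u \in L, \| u \| = 1} \sa[u] \leq \mu$, so $N_A(\mu) \geq m$, and no admissible $L$ exceeds dimension $N_A(\mu)$.

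The main obstacle is the construction and selfadjointness of $A$ in part (i): all the analytic content sits in passing from the abstract Riesz representative $G$ on $\cH_\sa$ to a genuine selfadjoint operator on $\cH$ and in verifying the inclusion $\dom A \subset \dom \sa$. For part (ii) the only subtlety is that the variational characterization must range over the form domain $\dom \sa$, which strictly contains $\dom A$; this is exactly what the spectral identity $\sa[u] = \sum_j \lambda_j |(u,e_j)|^2$ on $\dom \sa$ resolves, reducing the min-max principle to a finite-dimensional orthogonality argument.
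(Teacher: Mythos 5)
Your proof is correct; note that the paper gives no proof of this proposition at all, but simply quotes it as standard material with references to \cite{EE87,K95,RS78}, and your argument is precisely the classical one found in those textbooks: the Friedrichs--Kato first representation theorem (Riesz representation in the form Hilbert space $(\dom \sa, \|\cdot\|_\sa)$, selfadjointness of the resolvent-type operator $G$, uniqueness via $A' \subset A$) for part (i), and the Courant--Fischer min--max argument through the spectral decomposition for part (ii). The only step you invoke without justification is the second representation theorem, $\dom \sa = \dom\big((A + 1 - c_\sa)^{1/2}\big)$ together with $\sa[u] = \sum_j \lambda_j |(u,e_j)|^2$ on all of $\dom \sa$ --- this is a genuine (if standard) theorem rather than an immediate consequence of the spectral theorem, but citing it is entirely appropriate in this context, and the rest of your part (ii), including the finite-dimensional orthogonality argument and the two-sided estimate for the counting function \eqref{eq:EVcountId}, is complete and accurate.
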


\end{appendix}

\end{document}